\newtheorem{theorem}{Theorem}
\newtheorem{remark}{Remark}
\newtheorem{proposition}[theorem]{Proposition}
\newcommand{\R}{\mathbb{R}}
\newcommand{\Pmk}{\mathcal{P}^-_{k}}
\newcommand{\Pmo}{\mathcal{P}^-_{1}}
\title{\textbf{Allen-Cahn equation for the truncated Laplacian: unusual phenomena}}
\author{I. Birindelli, G. Galise\\
Dipartimento di Matematica, Sapienza Universit\`a di Roma}
\date{}
\begin{document}
\maketitle
\begin{abstract}
We study entire viscosity solutions of the Allen-Cahn type equation for the truncated Laplacian that are either one dimensional or radial, in order to shed some light on a possible extension of the Gibbons conjecture in this degenerate elliptic setting.
\end{abstract}

\bigskip
{\small
\noindent
\textbf{Key words:} Fully nonlinear degenerate elliptic equations, entire solutions, one dimensional symmetry.\\
\textbf{AMS subject classifications:} 35B08, 35D40, 35J60, 35J70.\\
\textbf{Acknowledgements:} This research is partially supported by INDAM-GNAMPA.}

\section{Introduction}
In this paper we consider some entire solutions of a degenerate elliptic equation
with non linear forcing term, where the behaviour of the solutions is quite different from 
the uniformly elliptic case. 

We begin by recalling the linear uniformly elliptic model problem we have in mind in order to underline the 
differences with the degenerate elliptic case that we shall treat here.
Consider the entire solutions of the Allen-Cahn equation
\begin{equation}\label{dgib} \Delta u +u-u^3=0\quad  \mbox{in }\ \R^N\end{equation}
where $N \geq 2$.

Writing the variable as $x = (x',x_N)$, with $x_N \in \R$, we are interested in solutions that satisfy the following conditions:
\begin{equation}\label{Gib}
 |u| <1 \ \mbox{ and}\ \lim_{x_N\rightarrow \pm\infty} u(x',x_N) = \pm 1\ \mbox{ uniformly in}\ x' \in \R^{N-1}.
\end{equation}
Any solution of \eqref{dgib} satisfying \eqref{Gib} is necessarily a function of $x_N$, i.e. it is independent of $x'$. This was referred to as Gibbons conjecture and it has been proved by several authors
\cite{ BBG, BHM, F} and then it has been generalized to different contexts \cite{BM, P}.
It is well known that this conjecture is somehow related to a conjecture of De Giorgi, see \cite{AC}, but it would be too long to dwell on the subject.

The function $f(u)=u-u^3$ can be replaced by a much more general class of functions and the result is still valid. 
On the other hand, if instead of the Laplacian one consider some degenerate elliptic operators there are a few results, let us mention e.g. the works in the Heisenberg group and in Carnot groups \cite{BL,BJ}.

In this work we shall mainly focus on equations whose leading term is the operator $\Pmk$ which is sometimes referred to as the truncated Laplacian. The operator $\Pmk$ is defined, for any  $N\times N$ symmetric matrix $X$, by the partial sum
$$
\Pmk(X)=\lambda_1(X)+\ldots+\lambda_k(X)
$$ 
of the ordered eigenvalues $\lambda_1\leq\ldots\leq\lambda_N$ of $X$.
We shall consider solutions of the equation
\begin{equation}\label{eq0}
\Pmk(D^2u)+f(u)=0\quad\text{in \;$\mathbb R^N$},
\end{equation}
for a general class of functions $f$ modelled on $f(u)=u-u^3$.
Clearly $\Pmk(D^2u)$ corresponds to the Laplacian when $k=N$, hence in the whole 
paper we shall suppose that $k=1, \dots, N-1$.

These operators are degenerate elliptic in the sense that 
$$X\leq Y\Rightarrow \Pmk(X)\leq \Pmk(Y).$$
In order to emphasize the strong degeneracy of these operators, let us mention that 
for any matrix $X$ there exists $M\geq 0$, not identically zero, such that 
$$\Pmk(X)=\Pmk(X+M).$$ 
It is immediate to see that, for example, one can take 
$M=v_N\otimes v_N$ where $v_N$ is an eigenvector corresponding to the largest eigenvalue of $X$. 

In previous works we have, together with Hitoshi Ishii and/or Fabiana Leoni,  
encountered a certain number of surprising results, related to  these degenerate 
operators.  
It would be too long to recall them all here but we shall 
briefly recall some of those closer to the results in this paper.

The classical Liouville result states that any harmonic function which is bounded from below is a constant. This is not true for bounded from below entire solutions of $\Pmk(D^2u)=0$. 

Similarly, concerning the semi-linear Liouville theorem, the existence of entire non negative solutions  of 
$F(D^2u)+u^p=0$ is quite different if $F(D^2u)=\Delta u$ or if $F(D^2u)=\Pmk(D^2u)$.
Indeed, for the Laplacian there is a threshold for $p$ between existence and non existence. This threshold is different for solutions and supersolutions and it depends on the dimension of the space. Instead, 
for solutions of the equation 
\begin{equation}\label{3primo}
\Pmk(D^2u)+u^p=0\quad \mbox{ in }\ \R^N
\end{equation}
the following hold:
\begin{enumerate}
\item For any $p>0$ there exist nonnegative viscosity solutions $u\not\equiv0$;
\item For any $p\geq1$ there exist positive classical solutions;
\item For $p<1$ there are no positive viscosity supersolutions of \eqref{3primo}.
\end{enumerate}
Interestingly, if one considers  instead non positive solutions, the non existence results 
are very similar to those of the Laplacian in $\R^k$. These Liouville  theorems were proved in \cite{BGL}.

Hence this lead us to wonder what happens to bounded entire solutions of the equation
\begin{equation}\label{01}\Pmk(D^2u)+u-u^3=0\end{equation} 
which satisfy $|u|<1$ and which,  a priori, may change sign.
Can one expect solutions to be one dimensional? 
Precisely, can we extend Gibbons conjecture to this degenerate case?

In order to answer these questions the first step is to study the one dimensional 
solutions, which is what we do in Section \ref{oned}. Interestingly the results are 
completely different from the uniformly elliptic case and this leads to different 
conjectures. 
Precisely, we consider only one dimensional solutions $u(x',x_N)=v(x_N)$ 
of \eqref{01} that satisfy $|u|<1$.
The results can be summarized in the following way:
{\em
\begin{enumerate}
\item The only classical solution is $u\equiv 0$; 
\item Any viscosity subsolution is non negative and there exists a  non trivial viscosity solution that satisfies
\begin{equation}\label{Gib2}
  \lim_{x_N\rightarrow -\infty} v(x_N) =0\  \mbox{ and } \lim_{x_N\rightarrow \infty} v(x_N) =1.
\end{equation}
\item There are no solutions that are strictly monotone or positive
\end{enumerate}
}
Observe that, even though $u^3-u$ is the derivative of the double well potential $F(u)=\frac14(1-u^2)^2$, the lack of ellipticity does not allow the solutions to go from $-1$ to $1$. This is the first surprising result.

\medskip
Hence Gibbons conjecture should be reformulated in the following way:

\smallskip
\noindent{\bf Question 1.} {\em
Is it true that if $u$ is a solution of \eqref{01}, $|u|<1$, satisfying 
$$ \lim_{x_N\rightarrow -\infty} u(x',x_N) =-1\  \mbox{ and } \lim_{x_N\rightarrow \infty} u(x',x_N) =1\ \mbox{uniformly in}\ x'\in\R^{N-1}$$ 
then $u$ is 1 dimensional?}
\\
If the answer was positive, this would imply that there are no solutions of \eqref{01} that satisfies \eqref{Gib}, since such one dimensional solutions don't exist in view of point 2 above. Or, equivalently, if such a solution of \eqref{01}  exists then the answer to Question 1 is negative.

\smallskip
\noindent {\bf Question 2.} {\em
Is it true that if $u$ is a solution of \eqref{01}, $|u|<1$, satisfying 
$$ \lim_{x_N\rightarrow -\infty} u(x',x_N) =0\  \mbox{ and } \lim_{x_N\rightarrow \infty} u(x',x_N) =1\ \mbox{uniformly in}\ x'\in\R^{N-1}$$
then $u$ is 1 dimensional?}
\\
This is more similar in nature to the uniformly elliptic case. 
Nonetheless, classical proofs of these symmetry results rely heavily on the strong maximum principle, or strong comparison principle, and on the sliding method \cite{BN}, which in general don't hold for the truncated Laplacian (see \cite{BGI,BGI2-}). And in particular they are not true here since we construct ordered solutions that touch but don't coincide.

\medskip
Another remark we wish to make is that, even though the solutions we consider are one dimensional, since  they are viscosity solutions, the test functions are not necessarily one dimensional. Hence the proofs in are not of ODE type.

\medskip
Other surprising results concern Liouville type theorems for \eqref{01}. 
Aronson and Weinberger in \cite{AW} and more explicitly, Berestycki, Hamel and Nadirashvili in \cite{BHN} have proved the following Liouville type result.
 
If $v$ is a bounded non negative classical solution of 
$$ \Delta u +u-u^3=0\  \quad \mbox{ in }\ \R^N$$
then either $u\equiv 0$ or $u\equiv 1$.

Once again this result fails if one replaces the Laplacian with the truncated Laplacian.
Indeed we prove that there exists infinitely many bounded non negative smooth solutions of
$$\Pmk(D^2u)+f(u)=0,\ \mbox{in}\ \R^N$$
for a general class of nonlinearities that include $f(u)=u-u^3$. 
 This is done by constructing infinitely many radial solutions of $\Pmk(D^2u)+f(u)=0$ 
which are positive in $\R^N$ but tend to zero at infinity.

\medskip
Finally in Section \label{ev} we show a different surprising phenomena related to the so called principal eigenvalue of $\Pmk$. This is somehow different in nature but we believe that it sheds some light to these extremal degenerate operators.

\section{One dimensional solutions}\label{oned}
We consider one dimensional viscosity solutions $u$, i.e. $u(x)=v(x_N)$ for  $x=(x_1,\ldots,x_N)$, of 
the problem
\begin{equation}\label{eq1}
\left\{\begin{array}{l}
\Pmk(D^2u)+u-u^3=0\quad\text{in \;$\mathbb R^N$}\\
\;|u|<1.
\end{array}
\right.
\end{equation}
The main result is the following.

\begin{proposition}\label{P1} Concerning problem \eqref{eq1}, the following hold:
\begin{itemize}
	\item[i)] If $u\in USC(\mathbb R^N)$ is a viscosity one dimensional subsolution  then $u\geq0$.
	\item[ii)] The only classical one dimensional solution is $u\equiv 0$.
	\item[iii)] There exist nontrivial viscosity one dimensional solutions, e.g. $u(x)=v(x_N)$, satisfying either
	\begin{equation}\label{Gib2'}
  \lim_{x_N\rightarrow -\infty} v(x_N) =0\  \mbox{ and } \lim_{x_N\rightarrow \infty} v(x_N) =1.
\end{equation}
or
\begin{equation}\label{Gib3}
  \lim_{x_N\rightarrow \pm\infty} v(x_N) =1.
\end{equation}
	\item[iv)] There are no positive viscosity one dimensional supersolutions.
	\item[v)] If $u\geq 0$ is a viscosity one dimensional supersolution  e.g. $u(x)=v(x_N)$ and it is nondecreasing in the $x_N$-direction then there exists $t_0\in\mathbb R$ such that 
	$$u=0\ \mbox{ in }\ X_0=\left\{x\in\mathbb R^N\,:\;x_N\leq t_0\right\}.$$
\end{itemize}
\end{proposition}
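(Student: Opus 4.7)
For a $C^2$ function $u(x) = v(x_N)$, the Hessian $D^2 u$ has eigenvalues $0,\dots,0,v''(x_N)$, so $\Pmk(D^2u)=\min(v''(x_N),0)$ whenever $k\leq N-1$ and the equation collapses to the one-dimensional relation $\min(v'',0)+v-v^3=0$. In the viscosity framework, for any 1D test $\psi$ for $v$ the embedded $\varphi(x)=\psi(x_N)$ is an admissible $N$-dimensional test with $\Pmk(D^2\varphi)=\min(\psi''(\bar x_N),0)$. For tests from above, a valid $N$-D test must have a nonnegative $x'$-block with zero off-diagonal coupling, which can only raise $\Pmk$, weakening the subsolution condition; for tests from below the $x'$-block is nonpositive, which can only lower $\Pmk$, again weakening the supersolution condition. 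Thus the binding constraints on both sides come from 1D tests and the analysis is essentially one-dimensional.

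To prove (i), assume $v(\bar x_N)=-\delta<0$ and consider the penalized function $w_C(x_N)=v(x_N)-C(x_N-\bar x_N)^2$. Since $v$ is USC and $|v|<1$, the bound $w_C(x_N)\leq 1-Cr^2$ for $|x_N-\bar x_N|\geq r$ confines the maximum $x_N^C$ of $w_C$ to $|x_N-\bar x_N|<\sqrt{(1+\delta)/C}$, so $x_N^C\to\bar x_N$ as $C\to\infty$. The quadratic $\psi_C(x_N)=v(x_N^C)+C[(x_N-\bar x_N)^2-(x_N^C-\bar x_N)^2]$ is a 1D test from above at $x_N^C$ with $\psi_C''=2C>0$, so the embedded extension has $\Pmk=0$ and the subsolution inequality forces $v(x_N^C)(1-v(x_N^C)^2)\geq 0$; since $|v|<1$ we deduce $v(x_N^C)\geq 0$. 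Upper semicontinuity then gives $\limsup v(x_N^C)\leq v(\bar x_N)=-\delta<0$, a contradiction. For (ii), given $v\in C^2$ we have $v\geq 0$ from (i); if $v\not\equiv 0$, the open set $U=\{v>0\}$ is nonempty and on it $v\in(0,1)$ forces $v''=v^3-v<0$. A finite endpoint $a$ of a connected component of $U$ would satisfy $v(a)=v'(a)=0$ (matching $v\equiv 0$ left of $a$), and strict concavity past $a$ would then force $v$ strictly decreasing just past $a$, contradicting $v>0$. So $U=\mathbb R$, making $v$ strictly concave and bounded on $\mathbb R$; but a bounded concave function on $\mathbb R$ is constant, contradicting strict concavity. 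Hence $v\equiv 0$.

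For (iii), the classical heteroclinic $\phi(x_N)=\tanh(x_N/\sqrt{2})$ of $\phi''+\phi-\phi^3=0$ is positive on $(0,\infty)$ and tends to $1$ at $+\infty$. Define
\begin{equation*}
v_1(x_N)=\begin{cases}0,&x_N\leq 0,\\ \phi(x_N),&x_N>0,\end{cases}\qquad v_2(x_N)=\phi(|x_N|),
\end{equation*}
realizing \eqref{Gib2'} and \eqref{Gib3} respectively. Away from $x_N=0$ these are classical solutions since $v\in(0,1)$ gives $v''=v^3-v<0$ and hence $\Pmk(D^2u)=v''$. At the corner $x_N=0$, $v$ has a global minimum with linear behavior $v(x_N)\sim|x_N|/\sqrt{2}$. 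No smooth $\psi$ can touch $v$ from above at $0$: such $\psi$ would be $\geq v\geq 0$ with $\psi(0)=0$, hence have a local minimum at $0$ with $\psi'(0)=0$ and $\psi(x_N)=O(x_N^2)$, which cannot dominate the linear growth of $v$. So the subsolution condition at $0$ is vacuous. For the supersolution condition at $0$, $v(0)-v(0)^3=0$, and any $N$-D test $\varphi$ from below has $D^2_{x'x'}\varphi\leq 0$, giving at least $N-1$ nonpositive eigenvalues of $D^2\varphi$ and hence $\Pmk(D^2\varphi)\leq 0$; the inequality holds automatically.

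For (iv), suppose $v>0$ is a viscosity 1D supersolution. The 1D supersolution inequality $\min(\psi''(\bar x_N),0)+v-v^3\leq 0$, combined with $v-v^3>0$ on $(0,1)$, forces $\psi''(\bar x_N)<0$ for every 1D test from below; in particular no affine test from below exists anywhere. The standard viscosity argument then gives concavity: if $v$ were below some secant $L$ on an interval $[a,b]$, by lower semicontinuity $v-L$ would attain its negative minimum at some $x_0\in(a,b)$, producing an affine test $L+(v-L)(x_0)$ from below at $x_0$, a contradiction. So $v$ is concave on $\mathbb R$; being strictly positive (hence bounded below), it must be constant, but a constant $v\in(0,1)$ fails the supersolution inequality since $v-v^3>0$. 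Finally (v) follows from (iv): if $v\geq 0$ is a nondecreasing viscosity supersolution and $t_0=\inf\{x_N:v(x_N)>0\}$, then $t_0=-\infty$ would give $v>0$ on $\mathbb R$ by monotonicity, contradicting (iv). Hence $t_0\in\mathbb R\cup\{+\infty\}$, $v\equiv 0$ on $(-\infty,t_0)$ by definition, and $v(t_0)=0$ follows from lower semicontinuity at $t_0$.
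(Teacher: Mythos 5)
Your proposal is correct and follows essentially the same route as the paper: a penalization argument for i), an analysis of the concavity of $v$ on $\{v>0\}$ for ii), the explicit $\tanh$-based construction with the Courant--Fischer/interlacing argument at the corner for iii), the ``no affine test from below, hence concavity'' argument for iv), and reduction to iv) for v). Two small imprecisions worth flagging (neither affects the logic): the preliminary claim that valid $N$-dimensional tests from above must have ``zero off-diagonal coupling'' is not literally true --- what actually holds at a smooth point is $D^2\varphi\geq D^2u$, which already gives $\Pmk(D^2\varphi)\geq\min(v'',0)$; and in ii) the identity $v'(a)=0$ follows from $a$ being a minimum of the nonnegative $C^1$ function $v$, not from ``$v\equiv0$ left of $a$'' (which need not hold).
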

\begin{remark} A consequence of  i) and v) is that there are no  viscosity one dimensional solutions increasing in the $x_N$-direction.
\end{remark}
\begin{proof}
\emph{i)} Fix $\hat x=(\hat x_1,\ldots,\hat x_N)\in\mathbb R^N$ and, for $\alpha>0$, let
$$
\max_{\overline B_1(\hat x)}\left[u(x)-\alpha(x_N-\hat x_N)^2\right]=u(x^\alpha)-\alpha(x_N^\alpha-\hat x_N)^2, \quad x^\alpha\in\overline B_1(\hat x).
$$
Then
$$
\alpha(x_N^\alpha-\hat x_N)^2\leq u(x^\alpha)-u(\hat x)\leq 2,
$$
and 
\begin{equation}\label{eq3}
\lim_{\alpha\to\infty}x_N^\alpha=\hat x_N.
\end{equation}
Moreover, using the one dimensional symmetry, for any $x\in\overline B_1(\hat x)$ we have 
\begin{equation*}
\begin{split}
u(x)-\alpha(x_N-\hat x_N)^2&\leq u(x^\alpha)-\alpha(x_N^\alpha-\hat x_N)^2\\
&=u(\hat x_1,\ldots,\hat x_{N-1},x^\alpha_N)-\alpha(x_N^\alpha-\hat x_N)^2.
\end{split}
\end{equation*}
Hence $u(x)-\alpha(x_N-\hat x_N)^2$ has a maximum in $(\hat x_1,\ldots,\hat x_{N-1},x^\alpha_N)\in B_1(\hat x)$ for $\alpha$ large in view of \eqref{eq3}. Then
$$
u^3(\hat x_1,\ldots,\hat x_{N-1},x^\alpha_N)-u(\hat x_1,\ldots,\hat x_{N-1},x^\alpha_N)\leq\Pmk\left({\rm diag} (0,\ldots,0,2\alpha)\right)=0.
$$
We deduce that $u(\hat x_1,\ldots,\hat x_{N-1},x^\alpha_N)\geq0$ for every $\alpha$ big enough. Using semicontinuity and \eqref{eq3} we conclude
$$
u(\hat x)\geq\limsup_{\alpha\to\infty}u(\hat x_1,\ldots,\hat x_{N-1},x^\alpha_N)\geq0.
$$

\bigskip

\noindent
\emph{ii)} By contradiction, let us assume that $u(x)=v(x_N)$ is a classical solution of \eqref{eq1} and that $v(t_0)\neq0$ for some $t_0\in\mathbb R$. By \emph{i)}, $v(t_0)>0$. Let
$$
\delta^-=\inf\left\{t<t_0\,:\;v>0\;\text{in}\;[t,t_0]\right\}\quad\text{and}\quad \delta^+=\sup\left\{t>t_0\,:\;v>0\;\text{in}\;[t_0,t]\right\}.
$$
If $\delta^-=-\infty$ and $\delta^+=+\infty$, then $v>0$ in $\mathbb R$ and, since $v^3-v<0$, we deduce by the equation \eqref{eq1} that $v''(t)<0$ for any $t\in\R$. In particular $v$ is concave in $\mathbb R$, a contradiction to $v>0$. \\
If $\delta^->-\infty$, then $v(t)>0$ for any $t\in(\delta^-,t_0]$ and $v(\delta^-)=0$. 
Moreover there exists $\xi\in(\delta^-,t_0)$ such that $v'(\xi)>0$. Using the equation \eqref{eq1} we deduce that $v''\leq0$ in $[\delta^-,t_0]$, hence  $v'(\delta)\geq v'(\xi)>0$. This implies that for $\varepsilon$ small enough $v(\delta^--\varepsilon)<0$, a contradiction to \emph{i)}.\\
The case $\delta^+<+\infty$ is  analogous.

\bigskip

\noindent
\emph{iii)} Let $u(x)=\tanh\left(\frac{x_N}{\sqrt{2}}\right)$. Then
$$
D^2u={\rm diag}(0,\ldots,0,u^3-u).
$$
If $x_N\geq0$ then $u^3-u\leq0$ and $\Pmk(D^2u)=u^3-u$, while if $x_N<0$ the function $u$ fails to be a solutions  since $\Pmk(D^2u)=0<u^3-u$.\\
Instead we claim that 
$$
\tilde u(x)=
\begin{cases}
\tanh\left(\frac{x_N}{\sqrt{2}}\right) & \text{if $x_N\geq0$}\\
0 & \text{otherwise}
\end{cases}
$$
is a viscosity solution of \eqref{eq1}. This is obvious for $x_N\neq0$. Now take $\hat x=(\hat x_1,\ldots,\hat x_{N-1},0)$. Since there are no test functions $\varphi$ touching  $\tilde u$ by above at $\hat x$, automatically $\tilde u$ is a subsolution. Let us prove the $\tilde u$ is also a supersolution. Let $\varphi\in C^2(\R^N)$ such that $\varphi(\hat x)=\tilde u(\hat x)=0$ and $\varphi\leq u$ in $B_{\delta}(\hat x)$. Our aim is to show that $\lambda_{N-1}\left(D^2\varphi(\hat x)\right)\leq0$, from which the conclusion follows.\\
Let $W_0=\left\{w\in\mathbb R^N\,:\;w_N=0\right\}$ and for any $w\in W_0$ such that $|w|=1$ let 
$$
g_w(t)=\varphi(\hat x+tw) \qquad t\in(-\delta,\delta).
$$
Since $\varphi$ touches $\tilde u$ from below at $\hat x$ and $\tilde u=0$ when $x_N=0$, we deduce that 
$g_w(t)$ has a maximum point at $t=0$. Then 
$$
g_w''(0)=\left\langle D^2\varphi(\hat x)w,w\right\rangle\leq0.
$$
Using the Courant-Fischer formula 
\begin{equation*}
\begin{split}
\lambda_{N-1}\left(D^2\varphi(\hat x)\right)&=\min_{\dim W=N-1}\max_{\;w\in W,\, |w|=1}\left\langle D^2\varphi(\hat x)w,w\right\rangle\\
&\leq \max_{w\in W_0,\, |w|=1}\left\langle D^2\varphi(\hat x)w,w\right\rangle\leq0
\end{split}
\end{equation*}
as we wanted to show.

\noindent
As above one can check that, for any $c\geq0$, the  one dimensional function
$$
\tilde u(x)=
\begin{cases}
\tanh\left(\frac{x_N-c}{\sqrt{2}}\right) & \text{if $x_N\geq c$}\\
0 & \text{if $|x_N|<c$}\\
-\tanh\left(\frac{x_N+c}{\sqrt{2}}\right) & \text{if $x_N\leq-c$}
\end{cases}
$$
 is non monotone in the $x_N$-direction and is solutions of \eqref{eq1}.
\begin{center}
\includegraphics[scale=0.5]{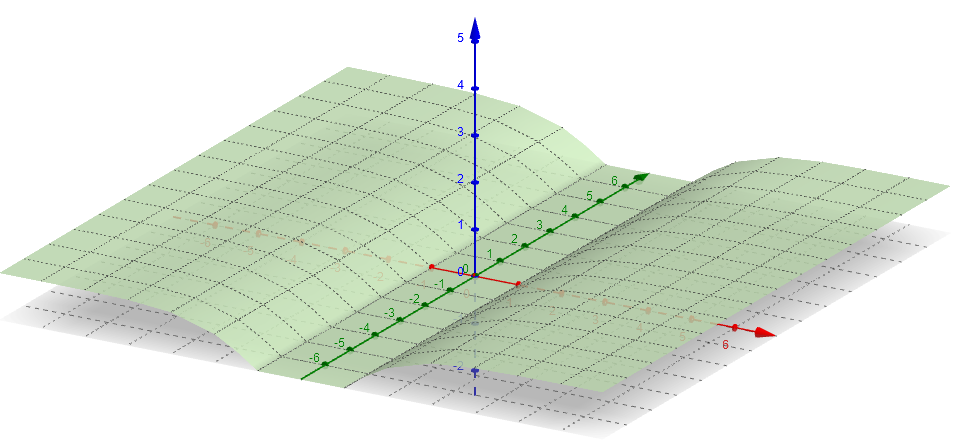}
\\
Fig: The function $\tilde u$
\end{center}

\medskip

\noindent
\emph{iv)} By contradiction suppose that $u(x)=v(x_N)$ is a positive viscosity supersolution of \eqref{eq1}. We claim that $v$ is strictly concave, leading to a contradiction with $v>0$ in $\mathbb R$. \\
We first prove that $v$ satisfies the inequality
\begin{equation}\label{convex}
v''(t)<0 \quad\text{for any $t\in\R$}
\end{equation} 
in the viscosity sense. For this let $\varphi\in C^2(\R)$ be  test function touching $v$ from below at $t_0$. If we consider $\varphi$ as a function of $N$ variables just by setting  $\tilde\varphi(x)=\varphi(x_N)$, then $\tilde\varphi$ is a test function touching $u$ from below at  $(x_1,\ldots,x_{N-1},t_0)$, for any $(x_1,\ldots,x_{N-1})\in\mathbb R^{N-1}$. Hence
$$
\Pmk\left({\rm diag} (0,\ldots,0,\varphi''(t_0))\right)\leq\varphi^3(t_0)-\varphi(t_0)<0
$$
and then necessarily $\varphi''(t_0)<0$.\\
If $v$ was not strictly concave, then there would exist $t_1<\bar t<t_2\in\R$ such that
$$
\min_{t\in[t_1,t_2]}\left(v(t)-v(t_1)-\frac{v(t_2)-v(t_1)}{t_2-t_1}(t-t_1)\right)=v(\bar t)-v(t_1)-\frac{v(t_2)-v(t_1)}{t_2-t_1}(\bar t-t_1)
$$
Then using  $\varphi(t)=v(t_1)+\frac{v(t_2)-v(t_1)}{t_2-t_1}(t-t_1)$ as a test function in \eqref{convex} we obtain a contradiction.

\bigskip

\noindent
\emph{v)} By  \emph{iv)} there exists $t_0\in\mathbb R$ such that $v(t_0)=0$. By the monotonicity assumption we get $v(t)=0$ for any $t\leq t_0$.
\end{proof}

\section{Radial solutions}\label{rad}

This section is concerned with the existence of entire radial solutions of the equation

\begin{equation}\label{eq7}
\Pmk(D^2u)+f(u)=0 \quad \text{in $\mathbb R^N$},
\end{equation}
where $f:\mathbb R\mapsto\mathbb R$ satisfies the following assumptions: there exists $\delta>0$ such that 
\begin{equation}\label{assumptionf}
\begin{cases}
f\in C^1((-\delta,\delta))\;\;\text{and it is nondecreasing in $(-\delta,\delta)$}\\
f(u)>0\;\;\;\forall u\in(0,\delta)\\ f(0)=0.
\end{cases}
\end{equation}
Prototypes of such nonlinearities are $$f(u)=\alpha u+\beta |u|^{\gamma-1}u$$ with $\alpha>0$, $\gamma>1$ and any $\beta\in\mathbb R$. 

\begin{proposition}
Under the assumptions \eqref{assumptionf} there exist infinitely many positive and bounded radial (classical) solutions of the equation \eqref{eq7}.
\end{proposition}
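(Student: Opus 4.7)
The plan is to look for radial solutions $u(x) = v(|x|)$ and to exploit the degeneracy of $\Pmk$ itself to reduce the PDE to a first-order ODE. For a $C^2$ radial function with $r = |x| > 0$, the Hessian has eigenvalue $v''(r)$ in the radial direction and eigenvalue $v'(r)/r$ of multiplicity $N-1$ in the tangential directions. Since $k \leq N-1$, as long as one can arrange $v'(r)/r \leq v''(r)$, the $k$ smallest eigenvalues are all tangential and
$$
\Pmk(D^2u) = k\,\frac{v'(r)}{r}.
$$
The PDE would then collapse to the first-order ODE $k v'(r)/r + f(v(r)) = 0$.

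First I would fix $a \in (0,\delta)$ and study the Cauchy problem
$$
v'(r) = -\frac{r}{k}\,f(v(r)), \qquad v(0) = a.
$$
Local existence and uniqueness follow from $f \in C^1((-\delta,\delta))$. Since $f(s) > 0$ on $(0,\delta)$, the solution is strictly decreasing on its maximal interval of existence, and separation of variables gives
$$
\int_{v(r)}^{a} \frac{ds}{f(s)} = \frac{r^2}{2k}.
$$
The crux is to show that $v$ never reaches $0$, equivalently that $\int_0^a ds/f(s) = +\infty$. This is the delicate point: since $f'(0)$ may vanish, one cannot simply use $f(s) \sim f'(0)s$. However, the $C^1$ assumption together with $f(0)=0$ gives a Lipschitz bound $f(s) \leq L s$ on $[0,a]$, so $1/f(s) \geq 1/(Ls)$ and the integral diverges. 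Hence the solution is global on $[0,\infty)$, stays in $(0,a]$, and $v(r) \to 0$ as $r \to \infty$ by monotone convergence combined with the ODE.

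Next I would verify the eigenvalue ordering that justified the reduction. Differentiating the ODE,
$$
v''(r) - \frac{v'(r)}{r} = \frac{r^2}{k^2}\, f'(v(r))\,f(v(r)) \geq 0,
$$
because $f$ is nondecreasing and nonnegative on $[0,\delta)$. Thus $v'(r)/r \leq v''(r)$ everywhere, the selection of the $k$ smallest eigenvalues is genuinely tangential, and $\Pmk(D^2u) + f(u) = 0$ holds pointwise for $r > 0$; at the origin all Hessian eigenvalues coincide with $v''(0) = -f(a)/k$, so the equation holds there too.

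Finally I would observe that $v \in C^2([0,\infty))$ with $v'(0) = 0$ (both read off from the ODE) makes the radial extension $u(x) = v(|x|)$ a genuine $C^2$ classical solution on $\mathbb R^N$, positive and bounded by $a$. Different choices of $a \in (0,\delta)$ produce solutions distinguished by their value $u(0) = a$, giving the required infinite family. The main obstacle is the global-existence step: one has to extract the divergence of $\int_0^a ds/f(s)$ from the $C^1$ hypothesis alone, since \eqref{assumptionf} imposes no quantitative growth of $f$ near $0$.
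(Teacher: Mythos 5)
Your proposal follows essentially the same route as the paper: reduce via the ordering $v'(r)/r \le v''(r)$ to the first-order Cauchy problem $v' = -\tfrac{r}{k}f(v)$, $v(0)=a$, solve by separation of variables, and verify the eigenvalue ordering by differentiating the ODE and using $f'\ge 0$, $f\ge 0$. The only cosmetic difference is how positivity/global existence is justified: the paper invokes ODE uniqueness (against the zero solution) together with monotonicity, while you establish the equivalent fact directly via divergence of $\int_0^a ds/f(s)$ from the $C^1$ Lipschitz bound near $0$.
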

\begin{proof}
For any $\alpha\in[0,\delta)$ let $v_\alpha$ be the solution of the initial value problem
\begin{equation}\label{eq8}
\begin{cases}
v_\alpha'(r)+\frac rk f(v_\alpha(r))=0,\;\;r\geq0\\
v_\alpha(0)=\alpha\,
\end{cases}
\end{equation}
defined in its maximal interval $I_\alpha=[0,\rho_\alpha)$.
Since $v_0\equiv0$, then $v_\alpha(r)>0$ for any $r\in I_\alpha$ if $\alpha>0$. We claim that $I_\alpha=[0,\infty)$. For this first note that $v'_\alpha(r)$ is nonpositive in a neighborhood of the origin since, using \eqref{assumptionf}-\eqref{eq8}, one has
$$
v'_\alpha(0)=0\qquad\text{and}\qquad v''_\alpha(0)=-\frac 1kf(\alpha)<0.
$$
If there was $\xi_\alpha\in(0,\rho_\alpha)$ such that $v'_\alpha(r)<0$ in $(0,\xi_\alpha)$ and $v'_\alpha(\xi_\alpha)=0$, then by monotonicity $0<v_\alpha(\xi_\alpha)<\alpha$ and by \eqref{eq8} we should obtain that $f(v_\alpha(\xi))=0$. But this is in  contradiction with \eqref{assumptionf}.
Hence $v_\alpha$ is monotone decreasing and positive, so  $I_\alpha=[0,\infty)$. Using again \eqref{eq8} we deduce moreover  that $\displaystyle\lim_{t\to\infty}v_\alpha(r)=0$ and for any $r>0$
\begin{equation}\label{eq9}
\begin{split}
v''_\alpha(r)&=\frac{v'_\alpha(r)}{r}-\frac rk f'(v_\alpha(r))v'_\alpha(r)\\
&\geq\frac{v'_\alpha(r)}{r}\,.
\end{split}
\end{equation}
In the last inequality we have used the facts that $v_\alpha$ is monotone decreasing, $0<v_\alpha(r)<\alpha$ for any $r>0$ and that $f$ is nondecreasing in $[0,\delta)$ by assumption.\\
By a straightforward computation the function $v_\alpha$ can be written as
\begin{equation}\label{eq172}
v_\alpha(r)=F^{-1}\left(\frac{r^2}{2k}\right),
\end{equation}
$F^{-1}$ being the inverse function of $\displaystyle F(r)=\int_r^\alpha\frac{1}{f(s)}\,ds$ in $(0,\alpha]$.\\
From the above we easily deduce that for any $\alpha\in(0,\delta)$ the radial function
$$
u_\alpha(x)=v_\alpha(|x|)
$$
is a positive radial solution of \eqref{eq7}.
\end{proof}
%
%

In the model case $f(u)=u-u^3$ the assumptions \eqref{assumptionf} are  satisfied with $\delta=\frac{1}{\sqrt{3}}$. By  a straightforward computations, it is easy to see that for any $\alpha\in(0,\frac{1}{\sqrt{3}}]$ the smooth functions
$$
u_\alpha(x)=\frac{1}{\sqrt{1+e^{\frac{|x|^2}{k}+\log\frac{1-\alpha^2}{\alpha^2}}}}
$$
are positive radial  solutions of 
$$
\Pmk(D^2u)+u-u^3=0 \quad \text{in $\mathbb R^N$}.
$$
Let us explicitly remark that the condition $\alpha\in(0,\frac{1}{\sqrt{3}}]$ ensures the validity of the inequality \eqref{eq9}.
\begin{center}
\includegraphics[scale=0.49]{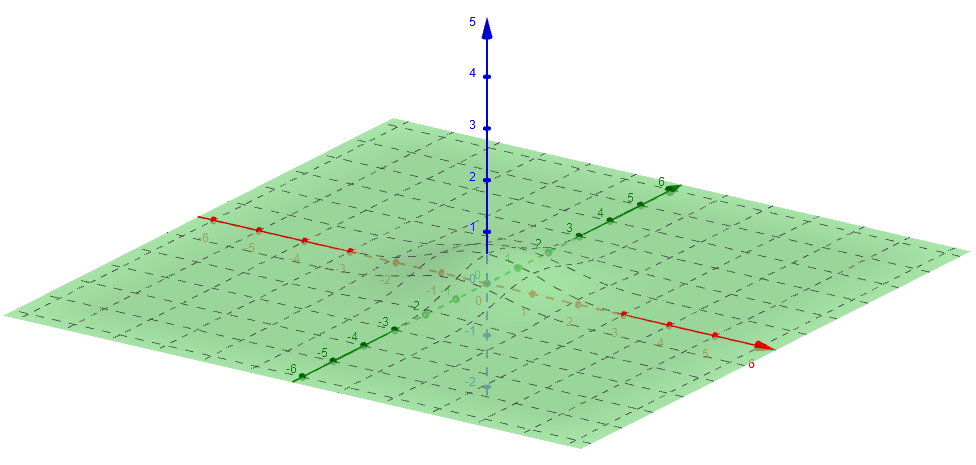}
\\
Fig: The function  
$u_\alpha(x)$
\end{center}

\medskip

\section{Another unusual phenomena}
Let us recall that in \cite{BGI}, given a domain $\Omega$, following Berestycki, Nirenberg and Varadhan \cite{BNV} we define
$$\mu_k^-(\Omega)=\sup\{\mu\in\mathbb R: \exists \phi\in USC(\Omega),\;\phi<0\ \mbox{ s.t. }\ \Pmk(D^2\phi)+\mu\phi\geq 0\ \mbox{ in } \Omega\}.$$
We proved many results among them that $\mu_1^-$ could be called an eigenvalue since under the right conditions on $\Omega$ we construct $\psi<0$ solution of 
$$\Pmo(D^2\psi)+\mu_1^-\psi= 0\ \mbox{in}\ \Omega, \ \psi=0\ \mbox{on}\ \partial\Omega.$$
But, differently from the uniformly elliptic case, $\mu_1^-$  does not satisfy the Faber-Krahn inequality, see \cite{BGI3}.

Another feature of $\mu_k^-$ is that it is the upper bound for the validity of the maximum principle. Precisely if $\mu<\mu_k^-$ and 
$$\Pmk(D^2v)+\mu v \leq 0\ \mbox{in}\ \Omega, \ v\geq 0 \ \mbox{on}\ \partial\Omega$$
then $v\geq 0$ in $\Omega$.

It is well known that for uniformly elliptic operators, the principal eigenvalue goes to 
infinity when the domain decreases to a domain with zero Lebesgue measure.

In this section we shall construct a sequence of domains $Q_n\subset\mathbb R^2$ that collapse to a 
segment such that $\mu_1^-(Q_n)$ the principal eigenvalue of $\Pmo$ stays bounded 
above by 1. Hence not only they collapse to a zero measure set, but they are narrower 
and narrower.

Indeed we consider
$$Q_n=\{(x,y)\in\R^2, 0< \frac{nx+y}{2}<\pi,\ \frac{-\pi}{2}<\frac{nx-y}{2}<\frac{\pi}{2}\}.$$
And we define 
$$w_n(x,y)=-(\sin(nx)+\sin y)=-2\sin\left(\frac{nx+y}{2}\right)\cos\left(\frac{nx-y}{2}\right).$$
Obviously $w_n<0$ in $Q_n$ and $w_n=0$ on $\partial\Omega$.
We shall prove that 
\begin{equation}\label{20120}
\Pmo(D^2w_n)+w_n\leq 0 \  \mbox{ in }\ \Omega.
\end{equation}
This will imply that $\mu_1^-(Q_n)\leq 1$. Indeed if $1<\mu_1^-(Q_n)$ the maximum 
principle would imply that $w_n\geq 0$ which is a contradiction. \\
Let us show \eqref{20120}. Clearly
 $$D^2w_n=\left(\begin{array}{cc}
n^2\sin(nx) & 0\\
0 & \sin y
\end{array}\right).
$$

We shall divide $Q_n$ in three areas. In $Q_n\cap(0,\frac{\pi}{n})\times (0,\pi)$ both $\sin(nx)$ and $\sin y$ are positive,
then 
$$\Pmo(D^2w_n)=\min(n^2\sin(nx),\sin y)\leq \sin y\leq \sin(nx)+\sin y=-w_n.$$
In $Q_n\cap (\{-\frac{\pi}{2n}<x\leq 0\}\cup \{\frac{\pi}{n}\leq x<\frac{3\pi}{2n}\})$ where
$\sin(nx)\leq 0$ and $\sin y\geq 0$, then
$$\Pmo(D^2w_n)=n^2\sin(nx)\leq\sin(nx)\leq \sin(nx)+\sin y=-w_n.$$
Finally, in $Q_n\cap (\{\pi\leq y< \frac{3\pi}{2}\}\cup \{-\frac{\pi}{2}<y\leq0\})$ where $\sin(nx)\geq 0$ and $\sin y\leq 0$ we have
$$\Pmo(D^2w_n)=\sin(y)\leq \sin(nx)+\sin y=-w_n.$$
This ends the proof.

\end{document}